\newcommand{\be}{\begin{equation}}
\newcommand{\ee}{\end{equation}}
\newcommand{\beq}{\begin{eqnarray}}
\newcommand{\eeq}{\end{eqnarray}}
\newtheorem{thm}{Theorem}[section]
\newtheorem{lma}{Lemma}[section]
\newtheorem{prop}{Proposition}[section]
\newtheorem{cor}{Corollary}[section]
\theoremstyle{remark}
\numberwithin{equation}{section}
\def\tr{\operatorname{tr}}
\def\be{\begin{equation}}
\def\ee{\end{equation}}
\def\bee{\begin{equation*}}
\def\eee{\end{equation*}}
\def\ol{\overline}
\def\lf{\left}
\def\ri{\right}
\def\K{K\"ahler }
\def\KE{K\"ahler-Einstein }
\def\KR{K\"ahler-Ricci }
\def\Ric{\text{\rm Ric}}
\def\cS{\mathcal{S}}
\def\wt{\widetilde}
\def\la{\langle}
\def\ra{\rangle}
\def\p{\partial}
\def\ol{\overline}
\def\heat{\lf(\frac{\p}{\p t}-\Delta_{g(t)}\ri)}
\def\tr{\operatorname{tr}}
\def\e{\varepsilon}
\def\a{{\alpha}}
\def\b{{\beta}}
\def\R{\mathbb{R}}
\def\ddb{\sqrt{-1}\partial\bar\partial}
\begin{document}

\title[]
{K\"ahler manifolds and mixed curvature}

\author{Jianchun Chu}
\address[Jianchun Chu]{School of Mathematical Sciences, Peking University, Yiheyuan Road 5, Beijing, P.R.China, 100871
\newline
\indent
Department of Mathematics, Northwestern University, 2033 Sheridan Road, Evanston, IL 60208}
\email{jianchunchu@math.pku.edu.cn}

\author{Man-Chun Lee$^1$}
\address[Man-Chun Lee]{Department of Mathematics, The Chinese University of Hong Kong, Shatin, N.T., Hong Kong
\newline
\indent
Department of Mathematics, Northwestern University, 2033 Sheridan Road, Evanston, IL 60208
\newline
\indent
Mathematics Institute, Zeeman Building,
University of Warwick, Coventry CV4 7AL
}
\email{mclee@math.cuhk.edu.hk}

\author{Luen-Fai Tam$^2$}
\address[Luen-Fai Tam]{The Institute of Mathematical Sciences and Department of Mathematics, The Chinese University of Hong Kong, Shatin, Hong Kong, China.}
 \email{lftam@math.cuhk.edu.hk}
 \thanks{$^1$Research partially supported by NSF grant DMS-1709894 and EPSRC  grant number P/T019824/1}
\thanks{$^2$Research partially supported by Hong Kong RGC General Research Fund \#CUHK 14301517.}

\renewcommand{\subjclassname}{
  \textup{2010} Mathematics Subject Classification}
\subjclass[2010]{Primary 53C44
}

\date{\today}

\begin{abstract}

In this work we consider compact K\"ahler manifolds with non-positive mixed curvature which is a  ``convex combination" of Ricci curvature and  holomorphic sectional curvature. We show that in this case, the canonical line bundle is nef. Moreover, if the curvature is negative at some point, then the manifold is projective with canonical line bundle being big and nef. If in addition the curvature is negative, then the canonical line bundle is ample. As an application, we answer  a question of Ni concerning manifolds with negative $k$-Ricci curvature and generalize a result of Wu-Yau and Diverio-Trapani to the conformally K\"ahler case.  We also show that the compact K\"ahler manifold is projective and simply connected if the mixed curvature is positive.
\end{abstract}

\keywords{ $k$-Ricci curvature, numerically effectiveness, ampleness, K\"ahler-Einstein metric}

\maketitle

\markboth{ Jianchun Chu, Man-Chun Lee and Luen-Fai Tam}{}

\section{Introduction}

In this work, we will study the structure of compact \K  manifolds $(M^n,h)$ with certain mixed    curvature conditions. More precisely, for any constants $\a, \b$, we consider the following mixed curvature defined on $T^{1,0}(M)$:
\be\label{e-C}
 \mathcal{C}_{\a,\b}(X)=\a \Ric(X,\ol X)+\b |X|^{-2}_h R(X,\ol X, X,\ol X)
\ee
for $X\in T^{1,0}(M)$ where $|X|_h\neq 0$. Here $R$ and $\Ric$ denote the curvature tensor and Ricci tensor of $h$ respectively.  In most cases, the dependency of $\mathcal{C}_{\a,\b}$ on $h$ is clear.    If we want to emphasis the dependency of $\mathcal{C}_{\a,\b}$ on $h$,  we will denote $\mathcal{C}_{\a,\b}$ by  $\mathcal{C}_{\a,\b}(h)$,   or $\mathcal{C}_{\a,\b,h}$ . The study is motivated by the following:
\begin{enumerate}
  \item [(i)]   $\mathcal{C}_{1,0,h}(X)  $ is the standard Ricci curvature.
  \item [(ii)] $\mathcal{C}_{0,1,h}(X)$ is the holomorphic sectional curvature up to scaling.
      \item[(iii)] $\mathcal{C}_{1, 1,h}(X)$  is the notion $\Ric^+ (X,\ol X)$   introduced by Ni \cite{Ni2019}. See Corollary \ref{c-simple-connected} for the definition.
          \item[(iv)]$\mathcal{C}_{1, -1,h}(X)$  is the orthogonal Ricci curvature  $\Ric^\perp (X,\ol X)$  introduced by Ni-Zheng \cite{NiZheng2018-1}, and is an analogue of the orthogonal bisectional curvature.
          \item[(v)]$\mathcal{C}_{k-1, n-k,h}(X)$ is closely related to the $k$-Ricci curvature introduced by Ni \cite{Ni2018},  Lemma \ref{l-Ric-k-1}.
\end{enumerate}

Motivated by the previous studies on different notions of curvature as above, we would like  to understand the structure of $(M,h)$ in which $\mathcal{C}_{\a,\b,h}\leq 0$, or more generally, for all non-zero $X\in T^{1,0}M$,
\be\label{e-negative}
 \mathcal{C}_{\a,\b}(X)+\ddb \phi(X,\ol X) \le \lambda  |X|_h^2
 \ee
for some smooth function $\phi$ and continuous function $\lambda$ which is nonpositive, or quasi-negative, or negative. We can also consider  its counter part:
there is a smooth function $\phi$ so that
\be\label{e-positive}
 \mathcal{C}_{\a,\b}(X)+\ddb \phi(X,\ol X)\ge \lambda|X|^2_h
 \ee
for some continuous function $\lambda$ which is positive.

In addition to the fact that $\mathcal{C}_{\a,\b}$ is a generalization of some known notions of curvature, it can also be defined in general for Hermitian metric if we replace the Ricci curvature by the first Ricci curvature of the Chern connection. Moreover, the above conditions are invariant under conformal changes (in the Hermitian category) in the sense that if $h$ satisfies \eqref{e-negative} for some $\phi$ and $\lambda$ with $\lambda\le 0$, then a conformal metric $e^{2f}h$ also satisfies the same condition with the same $\a, \b$ but with another $\wt \phi$ and $\wt \lambda$ which is still nonpositive, etc. We refer readers to Lemma~\ref{Lemma: Hermitian} for details.

Under the nonpositive condition \eqref{e-negative}, we have the following:
\begin{thm}\label{t-main-1}
Let  $(M^n,h)$ be a compact \K manifold satisfying \eqref{e-negative} for some constants $\a\geq 0, \b>0$, $\phi\in C^\infty(M)$ and a continuous   function $\lambda$.
\begin{enumerate}
  \item [{\bf(a)}] Suppose $\lambda\le0$,   then the canonical bundle $K_M$ of $M$ is numerically effective (nef).
  \item  [{\bf(b)}] Suppose $\lambda<0$, then $M$ is projective with ample $K_M$ and $M$ supports a \KE metric with negative scalar curvature.
  \item [{\bf(c)}] Suppose $\lambda$ is quasi-negative,  then $M$ is projective with $\int_M c_1(K_M)^n>0$.  In particular, together with {\bf(a)},  $K_M$ is big. If in addition $M$ does not contains any rational curve, then $K_M$ is ample.
\end{enumerate}
\end{thm}
 Recall that $K_M$ is numerically effective if for any $\e>0$, there is a smooth function $f$ so that $-\Ric(h)+\ddb f+\e \omega_h>0$ where $\omega_h$ is the \K form of $h$. $K_M$ is big if its Kodaira-Iitaka dimension of $K_M$ is equal to the dimension of $M$.

 In case of $\mathcal{C}_{1,0}$, the existence of \KE metric in Theorem \ref{t-main-1} {\bf(b)} is  the celebrated result of Aubin \cite{Aubin1976} and Yau \cite{Yau1978}. In the case  $\mathcal{C}_{0,1}$,  \eqref{e-negative} with $\phi=0$ is equivalent to the condition that  the holomorphic sectional curvature $H$ being bounded from above by $\lambda$.  In this case,  Theorem \ref{t-main-1}   was proved by  Wu-Yau  \cite{WuYau2016,WuYau2016-2}, Tosatti-Yang \cite{TosattiYang2017} and Diverio-Trapani \cite{DiverioTrapani2016}.  Note that  $H(X)\le 0$ implies that $M$ does not contain any rational curve by the work of Royden \cite{Royden1980}.  There have been   important  contributions on compact \K manifolds with $H\le 0$ by many other people, we refer readers to  \cite{HeierLuWong2010,HeierLuWong2016,
HeierLuWongZheng2017,LeeStreets2019,Nomura,YangZheng2016} and the reference therein for further details. Note that in our case, even for $\mathcal{C}_{0,1}$, we may allow $\phi$ to be nonzero in the condition \eqref{e-negative}.

As an application, Theorem \ref{t-main-1} gives an affirmative answer to a question of Ni \cite{Ni2018} asking whether a compact \K manifold with $\Ric_k<0$ is projective. On the other hand, in the Hermitian category, it was conjectured by Yang-Zheng \cite[Conjecture 1.1]{YangZheng2016} that  a compact Hermitian manifold with quasi-negative holomorphic sectional curvature is projective with ample $K_M$. Since condition \eqref{e-negative} is invariant under  conformal deformation, one can use Theorem~\ref{t-main-1}  to verify the conjecture is true for the special case that the Hermitian metric is conformally K\"ahler.

 We will use the twisted \KR flow \footnote{See also \cite{NiElliptic} for an elliptic proof which is based on the earlier version of this preprint.} to study the nefness and ampleness of $K_M$. Namely, we will study the following
\be\label{e-TKR-1}
\left\{
\begin{array}{ll}
\partial_t\omega(t)&= -\Ric(\omega(t))-\eta;\\
\omega(0)&=\omega_h,
\end{array}
\right.
\ee
  where $\omega(t)$ is a smooth family of \K forms, $\omega_h$ is the \K form of $h$, and  $\eta$ is a  smooth closed real $(1,1)$ form on $M$.  When $\eta=0$, this coincides with the usual \KR flow.  We will prove Theorem \ref{t-main-1}{\bf(a)} by showing the long-time existence.  We will prove Theorem \ref{t-main-1}{\bf(b)} by studying its long-time behaviour. The method may have independent interest. Finally, we will modify the continuity method of Wu-Yau \cite{WuYau2016-2} to obtain  Theorem \ref{t-main-1}{\bf(c)}.

 Next, let us discuss condition \eqref{e-positive}.
 There are many results on the structure of compact \K manifolds related to positive curvature recently. In \cite{Yang2018}, Yang proved a conjecture of Yau \cite{Yau1982} that a compact \K manifold with $\Ric_1>0$, i.e. positive holomorphic sectional curvature, must be projective and rationally connected, see also \cite{HeierWong2015}. The result is generalized further by Ni \cite{Ni2019} and Ni-Zheng \cite{NiZheng2018-1} recently. In particular, in \cite{Ni2019,NiZheng2018-1}, it was shown that compact \K manifolds with $\Ric^+>0$,  $\Ric^\perp>0$ or $\Ric_k>0$ are simply connected and projective. For more recent development, we refer interested readers to \cite{HeierWong2015,Matsumura2018-1,Matsumura2018-2,
Matsumura2018-3,NiZheng2018-1,NiZheng2018,
 Yang2018-1,Yang2020} and references therein.
%
 In this regard, following the argument in \cite[Theorem 2.7]{Ni2019}, we show that \K manifolds satisfying   \eqref{e-positive} with $\phi=0$  are also simply connected and projective.
\begin{thm}\label{t-positive-simplyconnect}
Suppose $(M,h)$ is a compact \K manifold with
\begin{equation}\label{positive-mixed-1}
\mathcal{C}_{\a,\b}(X)>0
\end{equation}
for some $\a, \b$ with $\a>0,  \a+\b\ge0$ and all non-zero $X\in T^{1,0}M$, then $h^{p,0}=0$ for all $1\leq p\leq n$. In particular, $M$ is simply connected and projective.
\end{thm}

The paper is organized as follows: In Section \ref{s-Royden},   using Royden's idea \cite{Royden1980} we show that $\Ric_k\le \lambda$  (respectively $\ge \lambda$) implies \eqref{e-negative} (respectively  \eqref{e-positive}) for some $\a, \b$, which in turns will give a useful  upper bound for the trace of the bisectional curvature. In Section \ref{C^0-estimate-KRF}, we will prove the nefness and ampleness of the canonical line bundle using the twisted \KR flow. In Section \ref{sec: quasi}, we will consider the quasi-negative case using compactness argument in \cite{WuYau2016-2} together with the twisted Monge-Amp\`ere equation. In Section~\ref{sec: Kahler}, we apply Theorem~\ref{t-main-1} to prove the structural theory of manifolds with non-positive $\Ric_k$ and $\Ric^+$. In Section~\ref{Sec: Hermitian}, we will show that \eqref{e-negative} is conformally invariant and a special case of \cite[Conjecture 1.1]{YangZheng2016}.  In Section~\ref{sec-positive}, we will prove Theorem~\ref{t-positive-simplyconnect}.\vskip .2cm

{\it Acknowledgement}: The authors would like to thank Lei Ni, Fangyang Zheng for their interest in this work and Shin-ichi Matsumura for sending us the related works on semi positive holomorphic sectional curvature. We also would like to thank Damin Wu and Shing-Tung Yau for useful discussions. Part of the works was done when the second  author visited the Institute of Mathematical Sciences at The Chinese University of Hong Kong, which he would like to thank for the hospitality.


\section{On symmetric bihermitian forms}\label{s-Royden}

In this section, we will first prove that the condition $\Ric_k\le \lambda$ will imply \eqref{e-negative} for some suitable $\a, \b$. This is essentially a result in linear algebra  on symmetric bihermitian form on a Hermitian vector space.  We will generalize  the result of Royden \cite{Royden1980}  where  an upper bound  on the trace of bisectional curvature  in terms of an upper bound of holomorphic sectional curvature was obtained.

Let $(V^n, h)$ be a Hermitian vector space with Hermitian metric $h$. Let $S(X,\ol Y, Z,\ol W)$ be a bihermitian form on $V^n$, namely $S(X,\ol Y,\cdot,\cdot)$ and $S(\cdot,\cdot, W,\ol Z)$ are Hermitian forms for fixed $X, Y$ or $W, Z$. Moreover, we assume that the bihermitian form $S$ satisfies the following symmetry:
\be\label{e-bihermitian}
S(X,\ol Y, Z,\ol W)=S(Z,\ol Y, X,\ol W), \ \
 \ol{S(X,\ol Y, Z,\ol W)}=S(Y,\ol X,W, \ol Z).
\ee
For $S$, we define its holomorphic sectional curvature, Ricci tensor and scalar curvature by
\[
H^S(X)=\frac{S(X,\ol X,X,\ol X)}{|X|^4_h}, \ \
\Ric^S(X,\ol Y)=\tr_h(S(X,\ol Y, \cdot,\cdot)), \ \
\cS=\tr_h \Ric^S.
\]
For any $k$-dimensional subspace $U$  of $V$, define the $\Ric_{k,U}^S$ to be the Ricci tensor for $S|_U$. Note that $\Ric_{k,U}^S$ depends also on the subspace $U$. We say that the $k$-Ricci curvature of $S$ is less than or equal to $\lambda$, denoted by $\Ric_k^S\leq \lambda$, for some $\lambda\in \mathbb{R}$ if for  any $X\in V$ and any $k$-dimensional subspace $U$ containing $X$, we have $\Ric_{k,U}^S(X,\bar X)\leq \lambda |X|^2_h$. The notion $\Ric_k^S\geq \lambda$ is defined analogously. It is easy to see that $\Ric_k^S$  on a unit vector is the holomorphic sectional curvature if $k=1$ and is the Ricci tensor for $V$ if $k=n$. We first give some relation between $\Ric_k$ and $\mathcal{C}_{\a,\b}$.


\begin{lma}\label{l-Ric-k-1}
Suppose $(V^n,h)$ is a Hermitian vector space and $S$ is a bihermitian form on $V$ satisfying \eqref{e-bihermitian}. For $1<k<n$, let $\{e_1,\dots, e_n\}$ be a unitary basis and for any $I \subset \{2,\dots,n\}$ with $|I|=k-1$, let $U_I$ be the subspace spanned by $\{e_1\}\cup \{e_i|i\in I\}$. Then
\bee
(k-1)\sum_{I; I \subset \{2,\dots,n\},|I|=k-1 } \Ric_{k,U_I}^S(e_1,\ol e_1)
= C_{k-2}^{n-2} \lf((n-k)H^S(e_1)+(k-1)\Ric^S(e_1,\ol e_1)\ri)
\eee
\end{lma}
\begin{proof}
For notational convenience, we use $S_{i\bar jk\bar l}$ to denote $S(e_i,\bar e_j,e_k,\bar e_l)$. Let $I$ be a  subset of $\{2,\dots,n\}$ such that $|I|=k-1$. Then there are $C^{n-1}_{k-1}$ different such $I$'s. For each $i\neq 1$, there are $C^{n-2}_{k-2}$ different such $I$'s which also contain $i$. Hence
\begin{equation*}
\begin{split}
\sum_{I; I \subset \{2,\dots,n\},|I|=k-1 } \Ric_{k,U_I}^S(e_1,\ol e_1)
 =&\sum_{I; I \subset \{2,\dots,n\},|I|=k-1 } \left(S_{1\bar1 1\bar 1}+\sum_{j\in I}S_{1\bar 1j\bar j} \right)\\
=&C^{n-1}_{k-1} S_{1\bar 11\bar 1}+C^{n-2}_{k-2} \sum_{i=2}^nS_{1\bar 1i\bar i}\\
=&\left(C^{n-1}_{k-1}-C^{n-2}_{k-2}\right) S_{1\bar 11\bar 1}+C^{n-2}_{k-2}{\sum_{i=1}^n S_{1\bar1i\bar i}}\\
=&C^{n-2}_{k-2}\left(\frac{n-k}{k-1}H^S(e_1)+{\Ric^S(e_1,\ol e_1)} \right).
\end{split}
\end{equation*}
From this, the result follows.
\end{proof}
\begin{lma}\label{Ric-k-interpolation}
Suppose $(V^n,h)$ is a Hermitian vector space and $S$ is a bihermitian form on $V$ satisfying \eqref{e-bihermitian}. Suppose $\Ric_k^S \leq  (k+1)\sigma$ for some integer $1\leq k\leq n$ and $\sigma\in \mathbb{R}$. Then for any $X\in V$, $X\neq 0$,  we have
$$ (k-1)  \Ric^S(X,\ol X)+(n-k)|X|^{-2}_hH^S(X)   \leq  {(n-1)(k+1)}\sigma |X|^2_h .$$

Similarly, if $\Ric_k^S \ge  (k+1)\sigma$, then
$$ (k-1)  \Ric^S(X,\ol X)+(n-k)|X|^{-2}_hH^S(X)   \ge  {(n-1)(k+1)}\sigma |X|^2_h .$$
\end{lma}
\begin{proof} We may assume $|X|_h=1$. If $k=1$ or $n$, the result is obviously true. In case $1<k<n$, we extend $e_1$ to be a unitary frame $\{e_1,\dots,e_n\}$. Suppose  $\Ric_k^S \leq  (k+1)\sigma$. With the notation as in Lemma~\ref{l-Ric-k-1}, we have
\bee
\begin{split}
(k-1)(k+1)C_{k-1}^{n-1}\sigma\ge&(k-1)\sum_{I; I \subset \{2,\dots,n\},|I|=k-1 } \Ric_{k,U_I}^S(e_1,\ol e_1)\\
=& C_{k-2}^{n-2} \lf((n-k)H^S(e_1)+(k-1)\Ric^S(e_1,\ol e_1)\ri).
\end{split}
\eee
From this, it is easy to see that the first part of the lemma is true. The case that
$\Ric_k^S \ge  (k+1)\sigma$ can be proved similarly.
\end{proof}

Now we adapt the trick of Royden \cite{Royden1980} to deduce an upper bound   on the trace of  bisectional curvature with respect to another Hermitian metric $g$. Motivated by Lemma \ref{Ric-k-interpolation}, we consider bihermitian form $S$ such that there is a real (1,1) form $\rho$ satisfying the following:
\be\label{e-Royden-1}
  h(X,\bar X)\cdot \rho(X,\bar X)+\b  S(X,\bar X,X,\bar X)\leq \lambda|X|_h^4
\ee
for all $X\in V^{1,0}$ for some constants $\b, \lambda$ where  $ \b$ are positive.

\begin{lma}\label{Ric-k-BK}
Suppose $(V^n,h)$ is a Hermitian vector space and $S$ is a bihermitian form on $V$ satisfying \eqref{e-bihermitian} and \eqref{e-Royden-1}. If $g$ is another Hermitian metric on $V$, then we have:
\bee
\begin{split}
 2g^{i\bar j}g^{k\bar l}  S_{i\bar jk\bar l}\le {} & \frac1{ \beta}\lf(\lambda (\tr_gh)^2- \tr_gh\cdot \tr_g\rho\ri)+\sum_{i=1}^nS(E_i,\overline{E_i},E_i,\overline{E_i})\\
 \le{}&\frac{\lambda}\b\lf((\tr_gh)^2+|h|_g^2\ri)-\frac1\b \tr_gh\cdot \tr_g\rho-\frac1\b\la\omega_h,\rho\ra_g
 \end{split}
\eee
where $\{E_i\}_{i=1}^n$ is a  unitary frame with respect to $g$ so that $h$ is diagonal. Here $S_{i\bar jk\bar l}=S(E_i,\bar E_j,E_k,\bar E_l)$ and $\omega_h$ is the \K form for $h$.
\end{lma}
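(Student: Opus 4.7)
The plan is to apply a Royden-style randomization argument in the frame $\{E_i\}$. Since $\{E_i\}$ is $g$-unitary and diagonalizes $h$, write $h(E_i,\ol{E_j})=h_i\delta_{ij}$, so $g^{i\bar j}=\delta_{ij}$, $\tr_gh=\sum_i h_i$, $\tr_g\rho=\sum_i\rho_{i\bar i}$, $|h|_g^2=\sum_i h_i^2$, $\la\omega_h,\rho\ra_g=\sum_i h_i\rho_{i\bar i}$, and the quantity we wish to bound becomes $2g^{i\bar j}g^{k\bar l}S_{i\bar jk\bar l}=2\sum_{i,k}S_{i\bar ik\bar k}$.

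To isolate this doubly-traced expression from the ``diagonal'' hypothesis \eqref{e-Royden-1}, I would test it on the vector $X_\epsilon=\sum_i\epsilon_iE_i$, where the $\epsilon_i$ are independent uniform fourth roots of unity in $\{\pm1,\pm\ii\}$, and then average over $\epsilon$. Writing $\mathbb{E}$ for this average over $4^n$ sign patterns, the elementary moment identities are $\mathbb{E}[\epsilon_i\bar\epsilon_j]=\delta_{ij}$, $\mathbb{E}[\epsilon_i\epsilon_j]=0$, and
$$\mathbb{E}[\epsilon_i\bar\epsilon_j\epsilon_k\bar\epsilon_l]=\delta_{ij}\delta_{kl}+\delta_{il}\delta_{jk}-\delta_{ij}\delta_{jk}\delta_{kl},$$
the last correction avoiding a double count when $i=j=k=l$. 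These give $|X_\epsilon|_h^2=\tr_gh$ deterministically and $\mathbb{E}[\rho(X_\epsilon,\ol{X_\epsilon})]=\tr_g\rho$. Expanding $S(X_\epsilon,\ol{X_\epsilon},X_\epsilon,\ol{X_\epsilon})$ into $\sum\epsilon_i\bar\epsilon_j\epsilon_k\bar\epsilon_lS_{i\bar jk\bar l}$ and using the partial symmetry $S_{i\bar jk\bar l}=S_{k\bar ji\bar l}$ from \eqref{e-bihermitian} to identify the two non-vanishing quartic contributions, I get
$$\mathbb{E}\!\left[S(X_\epsilon,\ol{X_\epsilon},X_\epsilon,\ol{X_\epsilon})\right]=2\sum_{i,k}S_{i\bar ik\bar k}-\sum_iS_{i\bar ii\bar i}.$$
Averaging \eqref{e-Royden-1} at $X=X_\epsilon$, dividing by $\b$, and rearranging then produces exactly the first asserted inequality.

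For the second line, I would apply \eqref{e-Royden-1} to each frame vector $X=E_i$ separately, which gives $\b S(E_i,\ol{E_i},E_i,\ol{E_i})\le\lambda h_i^2-\a h_i\rho_{i\bar i}$. Summing in $i$ converts $\sum_ih_i^2$ into $|h|_g^2$ and $\sum_ih_i\rho_{i\bar i}$ into $\la\omega_h,\rho\ra_g$, and substituting the resulting bound for $\sum_iS(E_i,\ol{E_i},E_i,\ol{E_i})$ into the first inequality gives the second. There is no substantive obstacle; the one delicate point is the choice of randomization, since real signs $\epsilon_i=\pm1$ satisfy $\mathbb{E}[\epsilon_i^2]=1$ and would leave an uncontrolled residue $\sum_{i,k}S_{i\bar ki\bar k}$ on the left, whereas the fourth roots of unity simultaneously enforce $\mathbb{E}[\epsilon_i^2]=0$ and $\mathbb{E}[|\epsilon_i|^2]=1$, which is exactly what kills the unwanted quartic pairing while preserving the Hermitian one.
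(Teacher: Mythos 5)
Your proof is correct and is essentially the paper's argument: you average the hypothesis over vectors $X_\epsilon=\sum_i\epsilon_iE_i$ with $\epsilon_i$ ranging over fourth roots of unity (the paper writes the same thing as a sum over $\mathbb{Z}_4^n$ rather than as an expectation), use the deterministic identity $|X_\epsilon|_h^2=\tr_gh$ and the fourth-moment formula to extract $2g^{i\bar j}g^{k\bar l}S_{i\bar jk\bar l}-\sum_iS_{i\bar ii\bar i}$, and then control $\sum_iS_{i\bar ii\bar i}$ by testing \eqref{e-Royden-1} on each $E_i$. The fourth-moment identity and the use of the symmetry \eqref{e-bihermitian} to merge the two nonzero pairings are both correct, and the closing remark about why $\mathbb{Z}_4$ is needed rather than $\pm1$ is a sound observation that the paper leaves implicit.
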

\begin{proof} 
We follow closely the argument of Royden \cite{Royden1980}. Let $\{E_i\}_{i=1}^n$ be a frame such that $g(E_i,\overline{E_j})=\delta_{ij}$ and $h(E_i,\overline{ E_j})=\tau_i \delta_{ij}$. Let $\eta_A=\sum_{i=1}^n\e_i^A E_i$ where {$A= (\e_i^A)\in \mathbb{Z}_4^n$} with $\mathbb{Z}_4$ being the finite group consisting of $4$-th roots of unity. Note that for any $A\in \mathbb{Z}_4^n$, we have
\begin{equation}\label{trace-1}
h(\eta_A,\bar \eta_A) = \sum_{i,j=1}^n \e_i^A\overline{\e_{j}^A} h(E_i,\overline{ E_j})
=\sum_{i=1}^n \tau_i = \tr_gh.
\end{equation}
Also since $\sum_{A\in \mathbb{Z}_4^n}\e_i^A\overline{\e_j^A}=0$ for all $i\neq j$, by symmetry we obtain
\begin{equation}\label{trace-2}
\sum_{A\in \mathbb{Z}_4^n} \rho(\eta_A,\overline{\eta_A})
=\sum_{A\in \mathbb{Z}_4^n} \sum_{i,j=1}^n \e_i^A \overline{\e_j^A} \rho(E_i,\overline{E_j})
=4^n \sum_{i=1}^n \rho(E_i,\overline{E_i})
=4^n \tr_g \rho.
\end{equation}
Similarly,
\begin{equation*}
\begin{split}
& \sum_{A\in \mathbb{Z}_4^n}S(\eta_A,\overline{\eta_A},\eta_A,\overline{\eta_A})
=\sum_{A\in \mathbb{Z}_4^n} \sum_{i,j,k,l}^n \e_i^A\overline{\e_j^A}\e_k^A\overline{\e_l^A}S(E_i,\overline{E_j},E_k,\overline{E_l})\\
= {} & 4^n \sum_{i\neq j}\left(S(E_i,\overline{E_j},E_j,\overline{E_i})+S(E_i,\overline{E_i},E_j,\overline{E_j})\right)
+4^n \sum_{i=1}^nS(E_i,\overline{E_i},E_i,\overline{E_i})\\
= {} & 4^n g^{i\bar j}g^{k\bar l}\left( S_{i\bar jk\bar l} +S_{i\bar lk\bar j}\right)-4^n  \sum_{i=1}^nS(E_i,\overline{E_i},E_i,\overline{E_i})\\
= {} & 4^n\lf(2g^{i\bar j}g^{k\bar l}  S_{i\bar jk\bar l} -  \sum_{i=1}^nS(E_i,\overline{E_i},E_i,\overline{E_i})\ri)
\end{split}
\end{equation*}
and
\begin{equation*}
\begin{split}
&\sum_{A\in \mathbb{Z}_4^n}|\eta_A|^4_h
=\sum_{A\in \mathbb{Z}_4^n} \sum_{i,j,\gamma,\delta=1}^n \e_i^A\overline{\e_j^A}\e_\gamma^A\overline{\e_\delta^A}h(E_i,\overline{E_j})h(E_\gamma,\overline{E_\delta})\\
={}&4^n \sum_{i\neq j} \left(h(E_i,\overline{E_j})h(E_j,\overline{E_i})+h(E_i,\overline{E_i})h(E_j,\overline{E_j})\right)
+4^n \sum_{i=1}^nh(E_i,\overline{E_i})h(E_i,\overline{E_i})\\
={}&4^n\lf(\sum_{i\neq j}\tau_i\tau_j+\sum_i\tau_i^2\ri)
=4^n(\tr_gh)^2.
\end{split}
\end{equation*}
Applying \eqref{e-Royden-1} for each $\eta_A$ and summing over $A\in \mathbb{Z}_4^n$, we conclude that
\bee
\begin{split}
\lambda (\tr_gh)^2\ge  \tr_gh\cdot \tr_g\rho+\beta\lf(2g^{i\bar j}g^{k\bar l}  S_{i\bar jk\bar l} -  \sum_{i=1}^nS(E_i,\overline{E_i},E_i,\overline{E_i})\ri).
\end{split}
\eee
This implies the first inequality in the Lemma. The second follows from \eqref{e-Royden-1}.
\end{proof}


\section{Canonical line bundle under non-positive curvature}\label{C^0-estimate-KRF}

In this section, we are going to prove Theorem \ref{t-main-1} {\bf(a)}, {\bf(b)}.
Consider a compact \K manifold $(M,h)$ with curvature $R_h$ satisfying \eqref{e-negative}. Hence it satisfies \eqref{e-Royden-1} for $\rho=\a\Ric_{h}+\ddb   \phi$. That is:
\be\label{e-condition-1}
h(X,\ol X)\cdot  (\a \Ric(h)+\ddb \phi)(X,\ol X)+\b  R^h(X,\ol X,X,\ol X)\le \lambda |X|^4
\ee
for some $ \a\geq 0, \b>0$ and for some function $\phi\in C^\infty(M)$ and non-positive function $\lambda$. We first show that $K_M$ is nef using \eqref{e-TKR-1}.  This flow is equivalent to the following Monge-Amp\`ere type flow:
\begin{equation}\label{e-TKR-2}
\left\{
\begin{array}{ll}
\partial_t\varphi &=\displaystyle{\log\frac{(\omega_h-t\Ric(\omega_h)-t\eta+\ddb\varphi)^n}{\omega_h^n}}; \\
\varphi(0)&=0,
\end{array}
\right.
\end{equation}
in the sense that if $\varphi$ satisfies \eqref{e-TKR-2} on $M\times[0,T]$ so that
\begin{equation}\label{TKRF-form}
\omega_h-t\Ric(\omega_h)-t\eta+\ddb\varphi>0,
\end{equation}
then $\omega(t)=\omega_h-t\Ric(\omega_h)-t\eta+\ddb\varphi$ will satisfy \eqref{e-TKR-1}. Moreover if $\omega(t)$ satisfies \eqref{e-TKR-1}, then $$\varphi(t)=\int^t_0\log \frac{\omega(s)^n}{\omega_h^n}\, ds$$ satisfies \eqref{e-TKR-2}.  In the following, $\varphi$ will always denote the solution of \eqref{e-TKR-2} which corresponds to \eqref{e-TKR-1}.  Moreover, $g(t)$ will be the \K metric with respect to the \K form $\omega(t)$.

Since $M$ is closed,  the twisted \KR flow $\omega(t)$ admits a short time solution, for example see \cite{GuedjZeriahi2017}.
In this section, we will estimate the existence time of the flow $g(t)$ under the assumption \eqref{e-condition-1}. We need the following fact, which states that if the solution $g(t)$ to \eqref{e-TKR-1} is uniformly equivalent to a fixed metric $h$ on $M\times[0,T_0)$, then we have higher order regularity of $g(t)$ up to $T_0$ and hence the solution can be extended beyond $T_0$, see \cite{Chu2016,Evans1982,Krylov1982,
ShermanWeinkove2012,TosattiWangWeinkoveYang}.  To summarize, we have the following existence time criteria for the parabolic complex  Monge-Amp\`ere equation.
\begin{lma} \label{higher-order}Let $g(t)$ be a smooth solution to \eqref{e-TKR-1} on $M\times[0,T_0)$. Suppose there is a positive constant $C>0$ such that
$$
C^{-1}h\le g(t)\le Ch
$$
on $M\times[0,T_0)$. Then there is $\e>0$ such that $g(t)$ can be extended to $M\times[0,T_0+\e)$ which satisfies \eqref{e-TKR-2}.
\end{lma}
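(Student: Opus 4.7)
The plan is to use parabolic regularity theory for the complex Monge-Amp\`ere equation \eqref{e-TKR-2}, combined with the short-time existence result for the twisted \KR flow, to produce a smooth extension of $g(t)$ past $T_0$. The hypothesis $C^{-1}h\le g(t)\le Ch$ is exactly what makes \eqref{e-TKR-2} a uniformly parabolic fully nonlinear equation on $M\times[0,T_0)$, so the standard chain of a priori estimates applies.

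First, I would extract the basic $C^0$, $C^1_t$, and $C^2$ controls on $\varphi$. Since $\omega(t)=\omega_h-t\Ric(\omega_h)-t\eta+\ddb\varphi$, the metric equivalence directly yields a uniform bound on $\ddb\varphi$, in particular on $\tr_h(\ddb\varphi)$. The time derivative $\partial_t\varphi=\log(\omega(t)^n/\omega_h^n)$ is uniformly bounded because the ratio of volume forms is controlled by the equivalence of metrics; integrating in $t$ over the finite interval $[0,T_0)$ then gives a uniform $C^0$ bound on $\varphi$. With these bounds in hand, I would invoke the parabolic Evans-Krylov theorem \cite{Evans1982,Krylov1982,ShermanWeinkove2012} to upgrade to a uniform $C^{2,\a}$ spatial estimate together with $C^{\a/2}$ regularity in time. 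This step rests on the concavity of $F(A)=\log\det(A)$ and the uniform ellipticity provided by $g(t)\simeq h$.

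Once $\varphi\in C^{2,\a}$ uniformly on $M\times[0,T_0)$, differentiating \eqref{e-TKR-2} produces a linear parabolic equation with $C^\a$ coefficients for each first derivative of $\varphi$, and the standard parabolic Schauder bootstrap (see e.g. \cite{Chu2016,TosattiWangWeinkoveYang}) gives uniform $C^{k,\a}$ bounds of all orders, independent of $t\in[0,T_0)$. In particular $\varphi(t)$ extends smoothly up to $t=T_0$, and the limiting form $\omega(T_0):=\omega_h-T_0\Ric(\omega_h)-T_0\eta+\ddb\varphi(T_0)$ is a smooth \K metric still uniformly equivalent to $h$. I would then apply short-time existence for \eqref{e-TKR-1} with initial datum $\omega(T_0)$ \cite{GuedjZeriahi2017} to obtain a smooth solution on some $[T_0,T_0+\e)$; gluing with the original solution and invoking parabolic uniqueness produces a smooth extension of \eqref{e-TKR-2} across $t=T_0$.

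The main obstacle is the Evans-Krylov step, where the mere $C^2$ bound must be promoted to $C^{2,\a}$ H\"older regularity of the second derivatives. This is the classical real-variable Evans-Krylov argument adapted to the complex parabolic setting on a closed \K manifold, and the technical references cited in the statement carry out precisely this adaptation. Everything else is standard bootstrapping from that estimate.
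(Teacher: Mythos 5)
The paper gives no proof of this lemma; it is stated as a known fact with citations to \cite{Chu2016,Evans1982,Krylov1982,ShermanWeinkove2012,TosattiWangWeinkoveYang} for the standard regularity-plus-restart argument. Your sketch correctly reconstructs exactly that argument: the uniform metric equivalence yields uniform parabolicity together with $C^0$, $\dot\varphi$, and $\ddb\varphi$ bounds on the finite interval, Evans--Krylov and Schauder bootstrapping then give uniform higher-order estimates so $\varphi$ extends smoothly to $t=T_0$, and short-time existence starting from $\omega(T_0)$ combined with uniqueness continues the flow past $T_0$.
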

\begin{proof}  We sketch the proof as follows. The condition that $C^{-1}h\le g(t)\le Ch$ ensure that we have uniform $C^0$ estimate. up on $[0,T_0)$  If $\eta=0$, \eqref{e-TKR-1} is the ordinary  \KR flow. By \cite[Corollary 1.1]{ShermanWeinkove2012},   all the derivatives of $g(t)$ with respect  to $h$ are uniformly bounded on $[\frac {T_0} 2, T_0)$. One can combine the method in \cite{ShermanWeinkove2012} with the techniques for more general fully nonlinear elliptic equations as in \cite{Chu2016,Evans1982,Krylov1982,TosattiWangWeinkoveYang} to show that the same estimates are true for general $\eta$. From this it is easy to see that the solution $g(t)$ can be extended beyond $T_0$.
\end{proof}

By direct calculation, we have the following well-known formulas,  for example see \cite{TosattiNote} and \cite{KRFnote}.
\begin{lma}\label{l-phi}
Let $\dot \varphi=\partial_t\varphi$. We have
\bee
\left\{
  \begin{array}{ll}
   \displaystyle{ \heat \dot\varphi=-\tr_g(\Ric(h)+\eta);}\\
   \displaystyle{\heat (t\dot\varphi-\varphi-nt)=-\tr_g h.}
  \end{array}
\right.
\eee
\end{lma}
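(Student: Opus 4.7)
The plan is to extract two scalar identities by differentiating the defining formula
\[
\omega(t)=\omega_h-t\,\Ric(\omega_h)-t\eta+\ddb\varphi
\]
in $t$, and then to take the trace with respect to $g(t)$ to convert $\ddb$-expressions into $\Delta_{g(t)}$-expressions. No hard analysis is required; this is an identity-chasing argument along the twisted K\"ahler--Ricci flow.

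First I would recall that the $\mathrm{Monge\text{-}Amp\grave{e}re}$ form of the flow gives
\[
\dot\varphi=\log\frac{\omega(t)^n}{\omega_h^n},
\qquad
\ddot\varphi=\tr_{g(t)}\partial_t\omega(t)=-\tr_{g(t)}\Ric(g(t))-\tr_{g(t)}\eta .
\]
Next, differentiating $\omega(t)=\omega_h-t\Ric(\omega_h)-t\eta+\ddb\varphi$ in $t$ yields
\[
\ddb\dot\varphi=\partial_t\omega(t)+\Ric(\omega_h)+\eta=\Ric(h)-\Ric(g(t)),
\]
so that $\Delta_{g(t)}\dot\varphi=\tr_{g(t)}\Ric(h)-\tr_{g(t)}\Ric(g(t))$. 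Combining this with the formula for $\ddot\varphi$ gives
\[
\heat\dot\varphi=\ddot\varphi-\Delta_{g(t)}\dot\varphi=-\tr_{g(t)}\eta-\tr_{g(t)}\Ric(h)=-\tr_{g(t)}(\Ric(h)+\eta),
\]
which is the first identity.

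For the second identity, I would directly compute each term of $\heat(t\dot\varphi-\varphi-nt)$. The time derivative is
\[
\partial_t(t\dot\varphi-\varphi-nt)=t\ddot\varphi-n.
\]
For the Laplacian, take $\tr_{g(t)}$ of the defining equation $\ddb\varphi=\omega(t)-\omega_h+t\Ric(\omega_h)+t\eta$ to obtain
\[
\Delta_{g(t)}\varphi=n-\tr_{g(t)}h+t\tr_{g(t)}\Ric(h)+t\tr_{g(t)}\eta,
\]
and multiply the previous expression for $\Delta_{g(t)}\dot\varphi$ by $t$. Plugging everything into $\heat(t\dot\varphi-\varphi-nt)$, the terms $t\tr_{g(t)}\Ric(g)$, $t\tr_{g(t)}\Ric(h)$, $t\tr_{g(t)}\eta$ and the constant $n$ all cancel in pairs, leaving exactly $-\tr_{g(t)}h$, as required.

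There is no real obstacle here; the only point that requires care is bookkeeping the sign and factor of $t$ when the $t$-derivative hits the prefactor in $t\dot\varphi$, since one must track $\dot\varphi$ produced this way against the $-\partial_t\varphi$ term, and then verify that all remaining curvature-trace terms cancel. Once the two identities are written out, they follow without further estimates.
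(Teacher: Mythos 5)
Your proof is correct and follows essentially the same approach as the paper: differentiate the potential expression $\omega(t)=\omega_h-t\Ric(\omega_h)-t\eta+\ddb\varphi$ and the Monge--Amp\`ere form of the flow, then take traces with respect to $g(t)$. The paper's write-up is terser and reuses the first identity to derive the second, whereas you expand everything and verify the cancellation directly, but the content is the same.
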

\begin{proof}
 The formulas are standard, we include the proof for reader's convenience. By differentiating \eqref{e-TKR-2}  with respect to time, we have
\begin{equation}
\begin{split}
\partial_t\dot\varphi&=\tr_g\lf(\frac{\p}{\p t} \lf(\omega_h-t\Ric( h)-t\eta+\ddb \varphi\ri)\ri)
\\&=\tr_g\left(-\Ric(h)-\eta+\ddb \dot\varphi \right)\\
&=-\tr_g (\Ric(h)+\eta)+\Delta_{g(t)} \dot\varphi.
\end{split}
\end{equation}
This proved the first equation. For the second equation,
 taking trace with respect to $g(t)$ of the equality
$$\omega(t)=\omega_h-t\Ric(\omega_h)-t\eta+\ddb\varphi,$$
 we have
$-t\tr_g(\Ric(h)+\eta)=n-\Delta_g\varphi-\tr_g(h)$. Hence
\begin{equation}
\begin{split}
\heat (t\dot\varphi)&=-t\tr_g ( \Ric(h)+\eta )+\dot\varphi\\
&=n-\tr_gh+\heat \varphi.
\end{split}
\end{equation}
This proved the second equation.
\end{proof}

Along the \KR flow, it is well-known that the scalar curvature is bounded from below. We have the following analogy for the twisted \KR flow.
\begin{lma}\label{volume-upper}
Let $g(t),t\in [0,T)$ be a solution to the  twisted \KR flow on $M$ with initial metric $g(0)=h$. Then the scalar curvature $\mathcal{S}(g(t))$ satisfies
$$\mathcal{S}(g(t))+\tr_g\eta \geq -\frac{n}{t+\sigma}$$
on $M\times [0,T)$ where $\sigma>0$ so that $\inf_M ( \mathcal{S}(h)+\tr_{h}\eta)\geq -n\sigma^{-1}$.  In particular,
\begin{equation}
 \sup_M\left(\log \frac{\det g(t)}{\det h}\right)=\sup_M\dot\varphi(\cdot,t)\leq \displaystyle n\log  \left(\frac{t+\sigma}{\sigma}\right).
\end{equation}
\end{lma}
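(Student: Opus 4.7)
The plan is to derive a parabolic differential inequality $\heat F \ge F^2/n$ for $F := \cS(g(t)) + \tr_{g(t)} \eta$, compare $F$ against the exact ODE solution $\phi(t) := -n/(t+\sigma)$ via the maximum principle, and then integrate the identity $\p_t \dot\varphi = -F$ to obtain the volume upper bound.

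For the evolution equation, the key observation is that along the flow $\p_t \log\det g = g^{i\bar j}\p_t g_{i\bar j} = -F$, so combining with $R_{i\bar j} = -\p_i\p_{\bar j}\log\det g$ one gets $\p_t R_{i\bar j} = \p_i\p_{\bar j} F$. Putting this together with the variations of $g^{i\bar j}$ and of $\tr_g \eta$ yields, after a standard computation,
\[
\heat F = |\Ric(g(t)) + \eta|^2_{g(t)}.
\]
Cauchy--Schwarz applied to the Hermitian tensor $R_{i\bar j} + \eta_{i\bar j}$ in a $g(t)$-unitary frame gives $|\Ric(g(t)) + \eta|^2_{g(t)} \ge \frac{1}{n}(\tr_{g(t)}(\Ric(g(t))+\eta))^2 = F^2/n$, hence $\heat F \ge F^2/n$.

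For the comparison step, note that $\phi$ exactly solves $\phi' = \phi^2/n$ with $\phi(0) = -n/\sigma \le \inf_M F(\cdot, 0)$. For $\e>0$ set $W_\e := F - \phi + \e(1+t)$, so $W_\e(\cdot, 0) \ge \e > 0$. If $W_\e$ first touches zero at some $(x_0, t_0)$ with $t_0 > 0$, then $\heat W_\e|_{(x_0, t_0)} \le 0$, while the inequality for $F$ gives
\[
\heat W_\e \ge \frac{F^2 - \phi^2}{n} + \e = \frac{(F - \phi)(F + \phi)}{n} + \e.
\]
At $(x_0, t_0)$ we have $F - \phi = -\e(1+t_0) < 0$ and, for $\e$ small, $F + \phi < 0$ (both are close to $\phi(t_0)<0$), so the right-hand side is strictly positive, a contradiction. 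Letting $\e \downarrow 0$ gives $F(\cdot, t) \ge \phi(t)$ on $M \times [0, T)$.

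Finally, from the Monge--Amp\`ere form of the flow, $\dot\varphi(\cdot, 0) = 0$ and $\p_t \dot\varphi = g^{i\bar j}\p_t g_{i\bar j} = -F$. Integrating in time and invoking the scalar curvature bound,
\[
\dot\varphi(\cdot, t) = -\int_0^t F(\cdot, s)\, ds \le \int_0^t \frac{n}{s+\sigma}\, ds = n\log\frac{t+\sigma}{\sigma},
\]
as required. The only nonroutine ingredient is the barrier argument in the middle step; it works cleanly precisely because the comparison ODE $y' = y^2/n$ is explicitly solvable, which is also what makes the constant $n$ sharp.
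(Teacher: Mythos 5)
Your proof is correct and follows essentially the same approach as the paper: both derive the evolution equation $\heat(\cS+\tr_g\eta)=|\Ric+\eta|^2_g$, use Cauchy--Schwarz to obtain the Riccati-type inequality $\heat F\ge F^2/n$, compare against the explicit ODE barrier $-n/(t+\sigma)$ by the maximum principle, and integrate $\p_t\dot\varphi=-F$ with $\dot\varphi(0)=0$. You have merely spelled out the barrier comparison that the paper summarizes in the single phrase ``follows from maximum principle,'' which is a fair unpacking rather than a different method.
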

\begin{proof}
Direct calculation shows
\begin{equation*}
\begin{split}
  \partial_t (\mathcal{S}+\tr_g\eta)=&-g^{i\bar l}g^{k\bar j}(R_{i\bar j}+\eta_{i\bar j})\partial_t g_{k\bar l}+g^{i\bar j}\partial_t R_{i\bar j}\\[1mm]
= {} &|\Ric+\eta|^2-g^{i\bar j}\partial_i\partial_{\bar j}\left(g^{k\bar l}\partial_t g_{k\bar l}\right)\\
=&|\Ric+\eta|^2+\Delta_{g(t)} (\mathcal{S}+\tr_g\eta)\\
\ge {} & \frac1n(\mathcal{S}+\tr_g\eta)^2+ \Delta_{g(t)} (\mathcal{S}+\tr_g\eta).
\end{split}
\end{equation*}
The lower bound of $\mathcal{S}+\tr_g\eta$ follows from the maximum principle. The upper bound of $\dot\varphi$ follows from the fact that $\partial_t\dot\varphi=-\mathcal{S}-\tr_g\eta$ and $\dot\varphi(0)=0$.
\end{proof}
The lemma gives an upper bound of $ (\det g)/(\det h) $. Next, we want to estimate the upper bound of $\tr_gh$. Combining these two bounds, one can obtain $C^0$ estimates along the twisted \KR flow. This in turns will give an estimate on the existence time. We need the following parabolic Schwarz Lemma which  is a straight forward application of Yau's Schwarz Lemma \cite{Yau1978-2}.

\begin{lma}\label{l-Schwarz}
Let $g(t)$ be a solution to the twisted \KR flow \eqref{e-TKR-1}, then
\bee
\begin{split}
\heat \log \tr_gh \leq& \frac{1}{\tr_gh}g^{i\bar j}g^{k\bar l}  R_{i\bar jk\bar l}(h)+\frac{1}{\tr_gh}g^{i\bar l}g^{k\bar j}h_{i\bar j}\eta_{k\bar l}.
\end{split}
\eee
\end{lma}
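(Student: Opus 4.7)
The plan is to adapt Yau's Schwarz Lemma computation \cite{Yau1978} to the twisted K\"ahler--Ricci flow. Fix a point $p\in M$ and a time $t$, and work in coordinates which are K\"ahler normal for $g(t)$ at $p$, i.e.\ $g_{i\bar j}(p)=\delta_{ij}$ and $\partial_k g_{i\bar j}(p)=0$, and which simultaneously diagonalize $h$, so $h_{i\bar j}(p)=\lambda_i\delta_{ij}$. Then $\tr_g h=\sum_i\lambda_i$ at $p$.

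First I would compute the time derivative. Differentiating $g^{i\bar l}g_{k\bar l}=\delta^i_k$ and using the flow equation \eqref{e-TKR-1} gives $\partial_t g^{i\bar j}=g^{i\bar l}g^{k\bar j}(R_{k\bar l}(g)+\eta_{k\bar l})$, so
\[
\partial_t \tr_g h \;=\; g^{i\bar l}g^{k\bar j}h_{i\bar j}\bigl(R_{k\bar l}(g)+\eta_{k\bar l}\bigr),
\]
which at $p$ equals $\sum_i\lambda_i\bigl(R_{i\bar i}(g)+\eta_{i\bar i}\bigr)$.

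Next I would compute $\Delta_g\tr_g h=g^{p\bar q}\partial_p\partial_{\bar q}(g^{i\bar j}h_{i\bar j})$ at $p$. Differentiating $g^{i\bar j}g_{k\bar j}=\delta^i_k$ twice and using $\partial_p\partial_{\bar q}g_{i\bar j}(p)=-R_{i\bar j p\bar q}(g)$ yields $\partial_p\partial_{\bar q}g^{i\bar j}(p)=R_{j\bar i p\bar q}(g)$; hence the curvature-of-$g$ contribution is $\sum_{p,i}\lambda_i R_{i\bar i p\bar p}(g)=\sum_i\lambda_i R_{i\bar i}(g)$. For the Hessian of $h$, I use the curvature formula
\[
\partial_p\partial_{\bar q}h_{i\bar j}\;=\;-R_{i\bar j p\bar q}(h)+h^{a\bar b}(\partial_p h_{i\bar b})(\partial_{\bar q}h_{a\bar j}),
\]
so tracing against $\delta^{pq}\delta^{ij}$ gives $-g^{i\bar j}g^{p\bar q}R_{i\bar j p\bar q}(h)+\sum_{p,i,a}\lambda_a^{-1}|\partial_p h_{i\bar a}|^2$. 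Subtracting from the time derivative, the $g$-Ricci terms $\sum_i \lambda_i R_{i\bar i}(g)$ cancel, leaving
\[
\heat \tr_g h \;=\; g^{i\bar l}g^{k\bar j}h_{i\bar j}\eta_{k\bar l}+g^{i\bar j}g^{k\bar l}R_{i\bar j k\bar l}(h)-\sum_{p,i,a}\lambda_a^{-1}|\partial_p h_{i\bar a}|^2.
\]

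Finally I pass to $\log\tr_g h$: dividing by $\tr_g h$ adds the term $|\nabla\tr_g h|^2_g/(\tr_g h)^2$. At $p$ one has $\partial_p\tr_g h=\sum_i\partial_p h_{i\bar i}$, and Cauchy--Schwarz gives
\[
\Bigl|\sum_i\partial_p h_{i\bar i}\Bigr|^2\;\le\;\Bigl(\sum_i\lambda_i\Bigr)\Bigl(\sum_i\lambda_i^{-1}|\partial_p h_{i\bar i}|^2\Bigr)
\;\le\;\tr_g h\cdot\sum_{i,a}\lambda_a^{-1}|\partial_p h_{i\bar a}|^2,
\]
so the positive gradient term is bounded by the negative term $\sum_{p,i,a}\lambda_a^{-1}|\partial_p h_{i\bar a}|^2/\tr_g h$ already present, and they cancel to give the stated inequality. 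The main technical care needed is in the index bookkeeping for $\partial_p\partial_{\bar q}g^{i\bar j}$ and the Cauchy--Schwarz step that produces the crucial sign to absorb the gradient term; everything else is a routine computation in normal coordinates.
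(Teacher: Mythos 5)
Your proof is correct and takes essentially the same route as the paper: the key steps are identical (compute $\partial_t \tr_g h$ from the flow, observe that the $R_{i\bar i}(g)$ contributions cancel against the Ricci-of-$g$ term in the Laplacian/Yau–Schwarz estimate, leaving only the $h$-curvature and $\eta$ terms). The only difference is that the paper invokes Yau's elliptic Schwarz Lemma \eqref{sch} as a known black box and then adds the time-derivative, whereas you re-derive that elliptic estimate from scratch via normal coordinates and the Cauchy--Schwarz absorption of the gradient term; this is more self-contained but not a different argument.
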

\begin{proof}
The proof is similar to the parabolic Schwarz Lemma in \KR flow \cite{SongTian2007},  see also \cite[Theorem 2.6]{KRFnote}. By Yau's Schwarz Lemma \cite{Yau1978-2}, for two \K metrics $g$ and $h$ we have
\begin{equation}\label{sch}
\Delta_{g}\log \tr_gh \geq \frac{1}{\tr_gh} \left(R^{i\bar j}(g)h_{i\bar j}-g^{i\bar j}g^{k\bar l}R_{i\bar jk\bar l}(h)\right).
\end{equation}
Applying \eqref{sch} with $g=g(t)$, we conclude that
\begin{equation}
\begin{split}
&\quad \heat \log \tr_gh\\
&\leq \frac{1}{\tr_gh} \left(g^{i\bar j}g^{k\bar l}R_{i\bar jk\bar l}(h)-R^{i\bar j}(g)h_{i\bar j}\right)+\frac{1}{\tr_gh} h_{i\bar j} \partial_t g^{i\bar j}\\
&= \frac{1}{\tr_gh}g^{i\bar j}g^{k\bar l}  R_{i\bar jk\bar l}(h)+\frac{1}{\tr_gh}g^{i\bar l}g^{k\bar j}h_{i\bar j}\eta_{k\bar l},
\end{split}
\end{equation}
 where we have used \eqref{e-TKR-1} in the last step.
\end{proof}

We are now ready to prove Theorem \ref{t-main-1} {\bf(a)}.
\begin{proof} [Proof of Theorem \ref{t-main-1} {\bf(a)}]
 For the \K metric $h$, we let
$$S=\inf\{s\in \mathbb{R}: \exists f\in C^\infty(M), \Ric(h)< s\omega_h+\ddb f\}.$$
We claim that $S\le 0$. If the claim is true, then for any $\e>0$ we can find smooth function $f$ so that $-\Ric(h)-\ddb  f\ge-\e \omega_h$. Since the first Chern class of the canonical line bundle $K_M$ is represented by $-\Ric(h)$, we see that the canonical bundle is nef.

Suppose on the contrary  that $S>0$.  Since $\a\geq 0$, for $\mu>S$ to be chosen later,  we can find $v\in C^\infty(M)$ such that
\begin{equation}\label{contra-assumpt}
\a\Ric(h)+\ddb \phi=\rho \leq \mu \a  \omega_h+\ddb v.
\end{equation}

Let $g(t)$ be the twisted \KR flow with $\eta=\ddb u$ and $g(0)=h$ where $ u=(2\b)^{-1}v$. We want to show that the maximal existence time $T_{\max}>S^{-1} $. If this is true, then \eqref{TKRF-form} implies that
$$
\omega(t)=\omega_h-t\Ric(h)-t\ddb u+\ddb \varphi
$$
is  a \K metric for some $t>S^{-1}$ where $\varphi$ is the solution to \eqref{e-TKR-2}. But this contradicts the definition of $S$. See also \cite{TianZhang2006,Tsuji1988} for the existence time characterization of the \KR flow.

 On $M\times[0,T_{\max})$, we are going to estimate $\Lambda:=\tr_gh$. Let $E_i$ be a  unitary frame with respect to $g(t)$ which diagonalizes $h$ at a point.
By Lemma \ref{l-Schwarz} and Lemma~\ref{Ric-k-BK} with $g=g(t)$ and $\rho=\a\Ric(h)+\ddb \phi$, we have
\begin{equation}\label{Schwarz-Lemma}
\begin{split}
 &\heat \log \Lambda
 \\
 \leq &\frac{1}{\Lambda}g^{i\bar j}g^{k\bar l}  R_{i\bar jk\bar l}(h)+\frac{1}{\Lambda}g^{i\bar l}g^{k\bar j}h_{i\bar j}u_{k\bar l}\\
\leq {} & \frac{\lambda}{2\b}\Lambda+\frac{\lambda}{2\b\Lambda}|h|^2
+\frac{1}{\Lambda}\langle \omega_h,\ddb u\rangle-\frac{1}{2 \beta}\tr_g\rho-\frac1{2\b\Lambda} \langle \rho,\omega_h\rangle\\
\leq {} & \frac{\lambda}{2\b}\Lambda+\frac{\lambda}{2\b\Lambda}|h|^2
-\frac{1}{ \beta}\tr_g\rho+\frac{1}{\Lambda}\langle \omega_h,\ddb u\rangle+\frac1{2\b\Lambda}\lf(\Lambda\tr_g\rho-\langle \rho,\omega_h\rangle\ri)\\
= {} & \frac{\lambda}{2\b}\Lambda+\frac{\lambda}{2\b\Lambda}|h|^2-{\frac{1}{ \beta}\tr_g \rho}+\frac{1}{\Lambda}\langle \omega_h,\ddb u\rangle
+\frac{1}{2\b\Lambda}\sum_{i=1}^n\rho(E_i,\bar E_i)\left( \Lambda-h(E_i,\bar E_i)\right)\\
\leq {} & \frac{\lambda}{2\b}\Lambda+\frac{\lambda}{2\b\Lambda}|h|^2-\frac{1}{ \beta}\tr_g\rho +\frac{1}{\Lambda}\langle \omega_h,\ddb u\rangle\\
&+\frac{1}{2\b\Lambda}\sum_{i=1}^n \left(\a\mu h(E_i,\bar E_i)+(\ddb v)(E_i,\bar E_i) \right)\left( \Lambda-h(E_i,\bar E_i)\right)\\
= {} &\left(\frac{\lambda+\a \mu}{2\b} \right)\Lambda+\left( \frac{\lambda-\a\mu}{2\b}\right)\frac{|h|^2}{\Lambda}+\frac{1}{2\b}\Delta_{g(t)}v-\frac{1}{ \beta}\tr_g\rho.
\end{split}
\end{equation}
Here we have used the fact that:
\bee
\frac1{2\b}\sum_{i=1}^n(\ddb v)(E_i,\bar E_i)h(E_i,\ol E_i)=
\la \ddb u,\omega_h\ra.
\eee
Since $n|h|^2\geq \Lambda^2$, then we have
\begin{equation*}
\left(\frac{\lambda+\a \mu}{2\b} \right)\Lambda+\left( \frac{\lambda-\a\mu}{2\b}\right)\frac{|h|^2}{\Lambda}\leq \frac{(n+1)\lambda+\a\mu (n-1) }{2n\b}\Lambda.
\end{equation*}
Let $w=t\dot\varphi-\varphi-nt$ and $B= \frac{(n+1)\lambda+\a\mu (n-1) }{2n\b}$. Combining the above with Lemma \ref{l-phi}, we deduce that
\be\label{evo-TKRF-1}
\begin{split}
& \heat \log\Lambda\le  B\Lambda+\frac{1}{2\b}\Delta_{g(t)}v-\frac{1}{ \beta}\tr_g\rho\\
= {} &-B\heat w +\frac{1}{2\b}\Delta_{g(t)}v+\frac1{\b}\heat  (\a\dot\varphi+\phi-\a u)\\
= {} &\heat \lf[-Bw- \frac1{2\b}v+\frac1\b (\a\dot\varphi+\phi-\a u)\ri].
\end{split}
\ee

Since at $t=0$,
$$
\log \Lambda+Bw+\frac1{2\b}v+\frac1\b (\a\dot\varphi -\phi+\a u)\le C
$$
for some constant $C$ depending only on $n, \a, \b, \sup_M|u|, \sup_M|\phi|$ and the upper bound of $\mu$.
It follows  from the maximum principle that
\begin{equation}\label{evo-TKRF-2}
\begin{split}
\log \Lambda\le & C_1+ \left(\frac\a\b -Bt\right)\dot \varphi+B\varphi +Bn t,
\end{split}
\end{equation}
where $C_1$ depends only on $n$, $\a$, $\b$, $\sup_M|v|$, $\sup_M |\phi|$ and the upper bound of $\mu$. Since $\lambda\leq0$,   $\left(\frac\a\b -Bt\right)\geq0$ for $t< \min\{ T_{\max},\frac{2n}{(n-1)\mu}\}$. Combining this with \eqref{evo-TKRF-2} and Lemma \ref{volume-upper}, we conclude that there exists a constant $C_2>1$ such that for all $t< \min\{ T_{\max},\frac{2n}{(n-1)\mu}\}$,
$$
C_2^{-1}h\le g(t)\le C_2 h.
$$
By Lemma~\ref{higher-order}, we conclude that $T_{\max}\ge \frac{2n}{(n-1)\mu}$. Hence $T_{\max}>S^{-1}$ if we choose $\mu$ sufficiently close to $S$. This completes the proof.
\end{proof}
Next we want to prove that $K_M$ is ample if $\lambda<0$.
\begin{proof}[Proof of Theorem \ref{t-main-1} {\bf(b)}]
By part {\bf (a)} of the theorem,  the canonical line bundle $K_M$ is nef. Therefore, for all $\e>0$, we can find $v\in C^\infty(M)$ such that
\begin{equation}
\label{nef-assumption}
 \a\Ric(h)+\ddb \phi=\rho\leq \e\a\omega_h+\ddb v.
\end{equation}
Choose $\e,\sigma>0$ small enough so that
$$B=\frac{(n+1)\lambda+(n-1)\e\a}{2\b n}\le -\sigma.
$$

Let $g(t)$ be the twisted \KR flow with $\eta=\ddb u$ where $ u=(2\b)^{-1}v$. By Theorem \ref{t-main-1} {\bf(a)} and the existence time estimates in \cite{TianZhang2006}, the flow exists for all time. Let $\Lambda=\tr_gh$ and $ F=\log \Lambda+\a\left(1+\b^{-1} \right)u-\b^{-1} (\a\dot\varphi+\phi)$. By Lemma \ref{l-phi} and the computation in \eqref{evo-TKRF-1},   we have
\begin{equation}
\heat F\le -\sigma \Lambda
\end{equation}
for some $\sigma>0$. Now let $G=F+\left(1+\frac{ \a n}{\b}\right)\log t$. Then
\begin{equation}\label{proof of b eqn}
\heat G\le -\sigma \Lambda+ \left(1+\frac{\a n}{\b}\right)t^{-1}.
\end{equation}
For any $T_0>0$, suppose that $G(x_0,t_0)=\sup_{M\times[0,T_0]}G$. Since $G\to-\infty$ as $t\to 0$, then $t_0>0$. At $(x_0,t_0)$, \eqref{proof of b eqn} shows $t_0\Lambda(x_0,t_0)\le C_1$
for some $C_1>0$ depending only on $\a, \b, \sigma$ and $n$. Hence by AM-GM inequality,
\begin{equation*}
\begin{split}
  \sup_{M\times [0,T_0]}G(x,t) \leq& G(x_0,t_0)\\
=   & \lf(\log \Lambda-\frac1\b{ (\a\dot\varphi+\phi)}+\a\left( 1+\frac1\b\right)u+\left(1+\frac{\a n}{\b}\right)\log t \ri)\Big|_{(x_0,t_0)}\\
= {} & \lf(\log \Lambda+\frac{\a n}{\b}\cdot \log \left(\frac{\det h}{\det g}\right)^{1/n}+\left(1+\frac{\a n}{\b}\right)\log t\right)\Big|_{(x_0,t_0)}
+C_2\\
\le {} & \left(1+\frac{\a n}{\b}\ri)\left(  \log \Lambda +\log t\right)\Big|_{(x_0,t_0)}+C_2\\
\leq &\lf(1+\frac{\a n} {\b}\ri) \log C_1  +C_2 =:C_3
\end{split}
\end{equation*}
for some constant $C_3>0$ independent of $T_0$. {By letting $T_0\rightarrow +\infty$,} we have an uniform upper bound of $\sup_{M\times [0,+\infty)}G$ and hence for $t$ large enough,
\bee
\begin{split}
\Lambda \leq C_4t^{-1}
\end{split}
\eee
for some constant $C_4$ independent of $t$ 
where we have used Lemma \ref{volume-upper}. Therefore, $g(t)\ge  {C_4}t h$ for some $C_4>0$ for $t$ large enough.  With the uniform lower bound, the normalized \KR flow will converge to the unique negative \KE metric, see \cite{HuangLeeTamTong2018} for example. One can also argue as follows. By rewriting $g(t)$ using potential, the lower bound implies that for sufficiently large $t$,
$$
-\Ric(h)-\ddb f\ge \e\omega_h
$$
for some $\e>0$ and $f(t)\in C^\infty(M)$.   By the result of Aubin \cite{Aubin1976} and Yau \cite{Yau1978}, $M$ supports a \KE metric with negative Ricci curvature. In particular, the canonical line bundle $K_M$ is ample and $M$ is projective.
\end{proof}

\section{Quasi-negative case}\label{sec: quasi}
In this section,   we will prove Theorem \ref{t-main-1} {\bf(c)}.   We consider compact \K manifolds which satisfies \eqref{e-condition-1} for some quasi-negative function $\lambda$. 
We will follow closely the arguments in \cite{WuYau2016-2}. Our main contribution is the following:
\begin{lma}\label{l-big} Let $(M^n,h)$ be a compact \K manifold satisfying \eqref{e-condition-1} for some quasi-negative function $\lambda$. Then $\int_M c_1(K_M)^n>0$.
\end{lma}
\begin{proof} It is equivalent to prove $\int_M (-\Ric(h))^n>0$. Since $K_M$ is nef by Theorem \ref{t-main-1} {\bf(a)}, for all $1\ge\e>0$ there exists $u_\e\in C^\infty(M)$ such that
\begin{equation}\label{equ-from-nef}
 -\Ric(h)+ \e\omega_h+\ddb u_\e>0.
\end{equation}
By Yau's Theorem \cite[Theorem 4, p.383]{Yau1978}, we can find $v_\e\in C^\infty(M)$ such that
\begin{equation}\label{seq-MA}
\left\{
\begin{array}{ll}
\left( -\Ric(h)+ \e\omega_h+\ddb (u_\e+v_\e)\right)^n=\exp\left(v_\e+u_\e+ \frac{1}{2\b}(\phi+\a u_\e)\right) \omega_h^n;\\
-\Ric(h)+ \e\omega_h+\ddb (u_\e+v_\e)>0.
\end{array}
\right.
\end{equation}
For notational convenience, we denote
$$ f_\e:=v_\e+u_\e+ \frac{1}{2\b}(\phi+\a u_\e); \ \ \omega_\e:=-\Ric(h)+ \e\omega_h+\ddb (u_\e+v_\e)$$
 so that
\begin{equation}
\omega_\e^n =\exp(f_\e)\,\omega_h^n; \  \ -\Ric(\omega_\e)+\Ric(\omega_h)=\ddb f_\e.
\end{equation}
By Stokes' theorem,
\begin{equation}
\lim_{\e\rightarrow 0}\int_M \omega_\e^n =\lim_{\e\rightarrow 0}\int_M \left(-\Ric(h)+\e \omega_h \right)^n= \int_M \left(-\Ric(h)\right)^n.
\end{equation}
Therefore, it suffices to estimate the lower bound of $\displaystyle \int_M \omega_{\e_i}^n$ for some $\e_i\to 0$.

In order to use Wu-Yau's method \cite{WuYau2016-2}, we will derive a differential inequality involving  of $\Lambda=\tr_{g}h$ where $g$ is the \K metric associated to the \K form $\omega_\e$. By Yau's Schwarz Lemma \cite{Yau1978-2},
\be\label{e-quasi-1}
-\Delta_{g}\log\Lambda \leq \frac1{\Lambda} g^{i\bar j}g^{k\bar l} R(h)_{i\bar jk\bar l}-\frac{1}{\Lambda}\la \Ric(g), h\ra.
\ee
Here and below the inner product is taken with respect to $g$. We choose a unitary frame $E_i$ with respect to $g$ so that $\rho$ is diagonal at a point. By \eqref{equ-from-nef} and similar computation of \eqref{Schwarz-Lemma}, for $\rho=\a\Ric(h)+\ddb \phi$, we see that
\begin{equation}
\begin{split}
 \frac1{\Lambda} g^{i\bar j}g^{k\bar l} R(h)_{i\bar jk\bar l}
\leq  {} & \frac{\lambda+\a\e}{2\b} \Lambda +\frac{\lambda-\a\e}{2\b \Lambda} |h|^2-\frac{\a}{\b}\tr_g \Ric(h)\\
&+\frac{1}{2\b}\Delta_g(\a u_\e-\phi)
-\frac1{2\b\Lambda}\la \ddb(\a u_\e+\phi),\omega_h\ra.
\end{split}
\end{equation}
On the other hand,
\be\label{e-quasi-3}
\begin{split}
-\frac{1}{\Lambda}\la \Ric(g), h\ra={}&-\frac{1}{\Lambda}\la \Ric(h)-\ddb f_\e, \omega_h\ra\\
={}&\frac1{2\b\Lambda}\la \ddb(\a u_\e+\phi),\omega_h\ra+\frac1\Lambda\la \omega_\e-\e\omega_h,\omega_h\ra
\end{split}
\ee
because $\ddb f_\e=\frac1{2\b}\ddb(\a u_\e+\phi)+\omega_\e-\e\omega_h+\Ric(h)$. By \eqref{e-quasi-1}--\eqref{e-quasi-3},  we conclude that
\begin{equation}
\begin{split}
-\Delta_{g}\log\Lambda &\le \left(\frac{\lambda+\e \a}{2\b} \right)\Lambda +\left(\frac{\lambda-\e \a}{2\b\Lambda} \right)|h|^2+\frac1{2\b}\Delta_g (\a u_\e-\phi)\\
&\quad -\frac\a\b\tr_g \Ric(h) +\frac1\Lambda \langle\omega_\e-\e\omega_h, \omega_h\rangle.
\end{split}
\end{equation}
Since $\lambda\leq 0$, by rewriting $-\Ric(h)=\omega_\e-\e\omega_h-\ddb (u_\e+v_\e)$, we can see that the function $ F=-\log \Lambda -\frac{1}{2\b}(\a u_\e-\phi)+\frac\a\b (u_\e+v_\e)$ satisfies
\begin{equation}\label{e-quasi-4}
\Delta_g F \leq \left(1+\frac{n\a}{\b} \right)+\left(\frac{\lambda}{2\b} \right)\Lambda
\leq  \left(1+\frac{n\a}{\b} \right)+\left(\frac{n\lambda}{2\b} \right)\exp\left( -\frac{\max_M f_\e}{n}\right).
\end{equation}
Here we have used AM-GM inequality at the last inequality. From this, one can proceed as in the proof of  \cite[Theorem 2]{WuYau2016-2}. We sketch the arguments for the convenience of the readers. First one can estimate $\sup_M f_\e$ as follows.   Since $M$ is compact, for all $1\ge \e>0$, there is $x_0\in M$ such that $f_\e(x_0)=\max_M f_\e$. At $x_0$, we have $\ddb f_\e(x_0) \leq 0$ and hence
\begin{equation}
\begin{split}
  \ddb (u_\e+v_\e) \leq &-\frac{1}{2\b}\ddb (\phi+\a u_\e)\\
\leq {} & -\frac{1}{2\b}\ddb \phi +\frac{\a}{2\b} \left(-\Ric(h)+\e\omega_h \right)\\
 \leq& C_0(\a,\b,\phi,n,h) \omega_h
\end{split}
\end{equation}
where we have used \eqref{equ-from-nef} in the second inequality. By substituting it back to the Monge-Amp\`ere equation \eqref{seq-MA}, we conclude that
\be\label{e-upperbound}
\sup_M f_\e\le C_1 (\a,\b,\phi,n,h),
\ee
which is independent of $\e$.
On the other hand, from \eqref{equ-from-nef} and \eqref{seq-MA}, we see that $C_1\omega_h+\ddb f_\e>0$ for some sufficiently large $C_1$ independent of $\e$. By \cite[Lemma 7]{WuYau2016-2}, we can find $\e_i\to0$ such that   the sequence $\{\exp(1+\sup_M f_{\e_i}-f_{\e_i})\}_{i=1}^\infty$   converges to $\exp(e^w)$ for some function $w$ almost everywhere. Since $(1+\sup_{M}f_{\e_i}-f_{\e_i})\geq1$, integrating \eqref{e-quasi-4} and using Lebesgue dominated convergence theorem, we obtain
\bee
\exp\left( -\frac{\max_M f_{\e_i}}{n}\ri)\le   \left(1+\frac{n\a}{\b} \right)\frac{\int_M\omega_{\e_i}^n}{\int_M \frac{-n\lambda}{2\b} \omega_{\e_i}^n}
\to  \left(1+\frac{n\a}{\b} \right)\frac{\int_M\exp(-e^w)\omega_h^n}{\int_M \frac{-n\lambda}{2\b} \exp(-e^w)\omega_h^n}
\eee
and so $\sup_M f_{\e_i}\ge -C_3$ for some $C_3>0$ independent of $i$. Together with the upper bound \eqref{e-upperbound},  passing to a subsequence $f_{\e_i}\to -e^w+c$ for some constant $c$. This implies that
$$
\int_M\omega_{\e_i}^n\to \int_M \exp(-e^w+c)\omega_h^n>0.
$$
This completes the proof.
\end{proof}


\begin{proof}[Proof of Theorem \ref{t-main-1} {\bf (c)}]
 By part {\bf(a)} of Theorem~\ref{t-main-1},  $K_M$ is nef.  Combine it with Lemma \ref{l-big},  it follows that $K_M$ is big  using \cite[Corollary 2.3.38, p.114]{MM07}. Hence  $M$ is Moishezon  by \cite[Theorem 0.5]{DemaillyPaun2004}. Since $M$ is K\"ahler, $M$ is projective by Moishezon's Theorem. If in addition that $M$ does not contain any rational curve, then $K_M$ is ample by the proof of \cite[Lemma 5]{WuYau2016}, see also \cite{DiverioTrapani2016}.
\end{proof}

\section{Applications on \K manifolds with non-positive curvature}\label{sec: Kahler}

Now we are in a position to apply Theorem~\ref{t-main-1} to prove the following structural result which answer a question of Ni \cite{Ni2019}.

\begin{thm}\label{t-main-2}
Suppose $(M^n,h)$ is a compact \K manifold with $\Ric_k(h)\leq -(k+1)\sigma$ for some non-negative continuous function $\sigma$ and integer $k$ with $1\leq k\leq  n$.
\begin{enumerate}
  \item [{\bf(a)}] The canonical bundle $K_M$ of $M$ is numerically effective (nef).
  \item  [{\bf(b)}] Suppose $\sigma>0$, then $K_M$ is ample. In particular, $M$ supports a \KE metric negative Ricci curvature and $M$ is projective.
  \item [{\bf(c)}] Suppose $\sigma$ is quasi-positive, then $\int_M c_1(K_M)^n>0$.
  In particular, $K_M$ is big and $M$ is projective. If in addition $M$ does not contains any rational curve, then $K_M$ is ample.
  \end{enumerate}
\end{thm}

\begin{proof}
If $k=1$ or $n$, the result is well-known. It suffices to consider $1<k<n$. By Lemma~\ref{Ric-k-interpolation}, the curvature of $g$ satisfies \eqref{e-condition-1} with $\a=k-1$, $\b=n-k$ and $\lambda=-(n-1)(k+1)\sigma$. The result follows from Theorem~\ref{t-main-1}.
\end{proof}


The condition \eqref{e-Royden-1} is also related to curvature $\Ric^+$ introduced in \cite{Ni2019} which is defined to be
$$
\Ric^+(X,\ol X)=\Ric(X,\ol X)+\frac{  R (X,\ol X,X,\ol X)}{|X|^2}.
$$
and is equivalent to the left hand side of \eqref{e-negative} with $\a=\b=1$ and $\phi=0$. It was proved in \cite[Proposition 6.2]{Ni2019} that a compact \K manifold with $\Ric^+<0$ has no nontrivial holomorphic vector field.
By Theorem~\ref{t-main-1}, we have the following  stronger results.
\begin{cor}
Suppose $(M^n,g)$ is a compact \K manifold with $\Ric^+\leq -(n+2)\sigma$ for some nonnegative function $\sigma$, then the canonical line bundle $K_M$ is nef. If $\sigma>0$ on $M$, then $K_M$ is ample.  If   $\sigma$ is quasi-positive, then $M$ is projective with $K_M$ being big and nef.
\end{cor}

\section{Conformal invariant of curvature condition}\label{Sec: Hermitian}

For a Hermitian manifold $(M,g,J)$, the Chern curvature $R$ is the curvature induced by the Chern connection $\nabla^C$ which is defined to be the connection such that $\nabla^C g=\nabla^CJ=0 $ with no $(1,1)$ components on the torsion. The first Chern Ricci curvature $\Ric^1$ is defined by
$$\Ric^1=-\ddb \log \det g.$$
Since $\Ric^1$ coincides with the Ricci curvature in the \K case, we can naturally extend the notion of $\mathcal{C}_{\a,\b}$ to general Hermitian metrics:
$$\mathcal{C}_{\a,\b}(X)=\a \Ric^1(X,\ol X)+\b |X|^{-2} R(X,\ol X, X,\ol X).$$
Using this, we show that condition~\eqref{e-negative} is conformally invariant.

\begin{lma}\label{Lemma: Hermitian}
If a Hermitian metric $g$ satisfies
$$\mathcal{C}_{\a,\b}(g)(X)+ \ddb \phi(X,\bar X)\leq \lambda |X|_g^2$$
 for some $\a,\b\in \R$, $\phi\in C^\infty(M)$ and $\lambda\in C^0(M)$ and for all non-zero  $X\in T^{1,0}M$, then $\tilde g=e^{-2F} g$ satisfies
 $$ \mathcal{C}_{\a,\b}(\tilde g)(X)+ \ddb \tilde \phi(X,\bar X)\leq \tilde \lambda |X|_{\tilde g}^2$$
 for $\tilde\phi=\phi+2(n\a+\b) F$, $\tilde\lambda=e^{-2F}\lambda$ and for all non-zero $X\in T^{1,0}(M)$.
\end{lma}
\begin{proof}  Denote the curvature and the first Chern Ricci curvature of the Chern connection simply by $R$ and $\Ric$.
By the conformal formula for Hermitian metric, for example see \cite[Appendix B.1]{LeeTam2020}, the Hermitian metric $\tilde g=e^{2F}g$ satisfies
\begin{equation}
\left\{
\begin{array}{ll}
\tilde R_{k\bar li\bar j}=e^{2F}\left(R_{k\bar li\bar j}-2g_{i\bar j} F_{k\bar l} \right);\\
\tilde R_{i\bar j}=R_{i\bar j}-2n F_{i\bar j}.
\end{array}
\right.
\end{equation}

Hence for $X\in T^{1,0}M$,
\begin{equation}
\begin{split}
 \mathcal{C}_{\a,\b}(\tilde g)(X)
&= \a \widetilde\Ric(X,\bar X)+\b |X|^{-2}_{\tilde g} \tilde R(X,\bar X,X,\bar X)\\
&=\a\left( \Ric_{X\bar X}-2nF_{X\bar X} \right)+|X|^{-2}_g\b \left( R_{X\bar XX\bar X} -2|X|^2_gF_{X\bar X}\right)\\
&= \mathcal{C}_{\a,\b}(g)(X)-2(n\a+\b) F_{X\bar X}.
\end{split}
\end{equation}
The assertion follows.
\end{proof}

By Lemma~\ref{HermitianThm}, it is clear that Theorem~\ref{t-main-1} also holds if the metric $h$ is Hermitian metric which is conformally K\"ahler. This in particular proves a special case of \cite[Conjecture 1.1]{YangZheng2016}.

\begin{thm}\label{HermitianThm}
Suppose $(M,g)$ is a compact  Hermitian manifold which is conformally K\"ahler such that the holomorphic sectional curvature $H_g\leq 0$, then $K_M$ is nef. Moreover if $H_g$ is negative at some point, then $M$ is projective with ample $K_M$.
\end{thm}
\begin{proof}
Since $g$ is conformally K\"ahler,  there is a \K metric $h$ and a smooth function $F$ such that $h=e^{-2F}g$.  The assumption $H_g\le0$ and Lemma~\ref{HermitianThm} imply that
$\mathcal{C}_{0,1}(h)(X)+\ddb \phi(X,\bar X)\leq 0$ for some $\phi\in C^\infty(M)$ and all non-zero $X\in T^{1,0}M$.  Namely, the \K metric $h$ satisfies the assumption in Theorem~\ref{t-main-1}.
 Now, the nefness of $K_M$ follows from {\bf (a)} of Theorem~\ref{t-main-1}.
 Moreover, since $H_g\le0$ and the standard metric of $\mathbb{CP}^1$  is a positive constant, by a result of  Royden \cite[p.558]{Royden1980}, there is no nonconstant holomorphic map from $\mathbb{CP}^1$ into $M$, and so $M$ does not contain  rational curve.



If $H_g$ is negative somewhere, then $\mathcal{C}_{0,1}(h)+\ddb \phi$ is negative at some point by Lemma~\ref{HermitianThm}. The projectivity of $M$ and ampleness of $K_M$ follows from {\bf (c)} of Theorem~\ref{t-main-1} and absence of  rational curve.
\end{proof}

\section{\K manifolds with positive mixed curvature}\label{sec-positive}
In \cite{Ni2019,NiZheng2018-1}, it was shown that compact \K manifolds with $\Ric^+>0$,  $\Ric^\perp>0$ or $\Ric_k>0$ are simply connected and projective. Following the argument in \cite[Theorem 2.7]{Ni2019}, we show that \K manifolds satisfying the positive counterpart of \eqref{e-negative} are also simply connected and projective.
\begin{thm}\label{positive-simplyconnect}
Suppose $(M,g)$ is a compact \K manifold with
\begin{equation}\label{positive-mixed-1}
\a |X|_g^2\cdot \Ric(X,\bar X)+\b R(X,\bar X,X,\bar X)>0
\end{equation}
for some 
{$\a, \b$ with $\a>0,  \a+\b\ge0$} and all non-zero $X\in T^{1,0}M$, then $h^{p,0}=0$ for all $1\leq p\leq n$. In particular, $M$ is simply connected and projective.
\end{thm}
 As a corollary, we recover the above mentioned results by Ni and Zheng \cite{Ni2019,NiZheng2018-1}:
\begin{cor}\label{c-simple-connected} Let $(M^n,g)$ be a compact \K manifold with (i) $\Ric_k>0$ for some $1\le k\le n$; or (ii) $\Ric^+>0$; or (iii) $\Ric^\perp>0$, then $h^{p,0}=0$ for all $1\leq p\leq n$. In particular, $M$ is simply connected and projective.
Here $\Ric^\perp $ is defined as
$$
\Ric^\perp(X,\bar X)=: \Ric(X,\bar X)-\frac{H(X)}{|X|^2}
$$
for all non-zero $X\in T^{1,0}M$.
\end{cor}
\begin{proof} We only need to check the condition \eqref{positive-mixed-1}. If $\Ric_k>0$ for some $1<k<n$, then the condition is satisfied with  $\a=(k-1), \b=(n-k)$ by Lemma \ref{Ric-k-interpolation}. If $k=1$, the condition is satisfied  for small enough $\a>0$ and $\b=1$. If $k=n$, then the condition is satisfied for $\a=1$ and small enough $\b>0$. If $\Ric^+>0$, then the condition is satisfied with $\a=\b=1$. If $\Ric^\perp>0$, then the condition is satisfied with $\a=1, \b=-1$.
\end{proof}

To prove Theorem \ref{positive-mixed-1},  we first show that the Hodge numbers vanish. This will follow from a slight modification of argument in \cite[Section 6]{Ni2019}.
\begin{prop}\label{p-hodge} Suppose $(M,g)$ is a compact \K manifold satisfying \eqref{positive-mixed-1} for some $\a> 0$ and $\b\in\mathbb{R}$ such that $\a+\frac{2\b}{p+1}\geq 0$ for an integer $1\leq p\leq n$, then $h^{p,0}=0$. In particular if $\a+\b\geq 0$, then $h^{p,0}=0$ for all $1\leq p\leq n$.
\end{prop}
\begin{proof}
The first part of proof follows similarly as in that of \cite[Theorem 2.2]{Ni2019}. Assuming the existence of a nonzero holomorphic $(p,0)$-form $\phi$, we may conclude that at the point $x_0$ where the maximum of the comass $\|\phi\|_0$ is attained,
\begin{equation}\label{positive-max-comass}
0\geq \sum_{i=1}^pR_{v\bar vi\bar i}
\end{equation}
for any $v\in T^{1,0}M$, for some choice of unitary frame $\{\frac{\partial}{\partial z^l}\}_{l=1}^n$. Denote $\Sigma=\mathrm{span}\{\frac{\partial}{\partial z^l}: l=1,...,p\}$. 
{On the other hand, assumption \eqref{positive-mixed-1} implies
\begin{equation*}
\begin{split}
0< {} & \fint_{Z\in \Sigma, |Z|=1} \a \Ric(Z,\bar Z)+\b H(Z) \, d\theta(Z)\\
= {} & \frac{\a}{p} \sum_{i=1}^p R_{i\bar i}+\frac{2\b}{p(p+1)} S_p(x_0,\Sigma)
=\frac{\a}{p}\sum_{j=p+1}^n \sum_{i=1}^pR_{i\bar ij\bar j} +\frac1p\left(\a+\frac{2\b}{p+1} \right)\sum_{i,j=1}^p R_{i\bar ij\bar j}.
\end{split}
\end{equation*}
By \eqref{positive-max-comass}, if $\a+\frac{2\b}{p+1}\geq 0$ and $\a\geq 0$, then the right hand side is non-positive which is impossible. This completes the proof.}
\end{proof}

The next ingredient is the compactness of positively curved \K manifold. This was done by the second variational argument in the proof of Bonnet-Meyer theorem.
\begin{prop}\label{l-sc}
Let $(M^n,g)$ be a complete K\"ahler manifold satisfying \eqref{positive-mixed-1} for some $\a, \b, \lambda$ with $\a, \lambda>0$, $\a+\b\ge0$. Then $(M,g)$ is a compact manifold with $\mathrm{diam}(M,g) \leq \pi\sqrt{\frac{\alpha(2n-1)+\beta}{\lambda}}$.
\end{prop}
\begin{proof}
For any $p,q\in M$, let $\gamma:[0,\ell]\rightarrow M$ be a minimizing geodesic connecting $p$ and $q$. It suffices to show $\ell \leq \pi\sqrt{\frac{\alpha(2n-1)+\beta}{\lambda}}$. Let $\{e_{i}\}_{i=1}^{2n}$ be an orthonormal parallel vector fields along $\gamma$ with $e_{2n-1} = J\gamma'$ and $e_{2n} = \gamma'$, where $J$ is the complex structure of $(M,g)$. For $1\leq i \leq 2n-1$, define
\[
V_{i}(t) = \sin\left(\frac{\pi t}{\ell}\right)e_{i}(t), \quad
\phi_{i}(t,s) = \exp_{\gamma(t)}(sV_{i}(t)), \quad
L_{i}(s) = \mathrm{length}(\phi_{i}(\cdot,s)).
\]
Since $\phi_{i}(t,0)=\gamma(t)$ and $\gamma$ is minimizing, then $L_{i}$ has a minimum point at $0$. Using second variation formula of arc length, it is clear that
\begin{equation}\label{diameter eqn 1}
\begin{split}
0 \leq {} & \left.\frac{d^{2}}{ds^{2}}\right|_{s=0}L_{i}(s)
= \int_{0}^{\ell}\left(|\nabla V_{i}|_{g}^{2}-R(V_{i},\gamma',\gamma',V_{i})\right)dt \\
= {} & \int_{0}^{\ell}\left(\left(\frac{\pi}{\ell}\right)^{2}\cos^{2}\left(\frac{\pi t}{\ell}\right)
-\sin^{2}\left(\frac{\pi t}{\ell}\right)R(e_{i},\gamma',\gamma',e_{i})\right)dt.
\end{split}
\end{equation}
Applying \eqref{positive-mixed-1} to $X=\frac{1}{\sqrt{2}}(\gamma'-\sqrt{-1}J\gamma')$, we see that
\begin{equation}\label{diameter eqn 2}
\alpha\Ric(\gamma',\gamma')+\beta R(J\gamma',\gamma',\gamma',J\gamma') \geq \lambda.
\end{equation}
Since $\alpha>0$ and $\a+\beta\ge0$, \eqref{diameter eqn 1} shows
\begin{equation*}
\begin{split}
& 0 \leq \left.\frac{d^{2}}{ds^{2}}\right|_{s=0}\left(\alpha\sum_{i=1}^{2n-2}L_{i}(s)+(\a+\beta) L_{2n-1}(s)\right) \\
= {} & \int_{0}^{\ell}\lf(\alpha(2n-2)+(\a+\beta)\ri)\left(\frac{\pi}{\ell}\right)^{2}\cos^{2}\left(\frac{\pi t}{\ell}\right)dt \\
& -\int_{0}^{\ell}\sin^{2}\left(\frac{\pi t}{\ell}\right)\left(\alpha\sum_{i=1}^{2n-2}R(e_{i},\gamma',\gamma',e_{i})+(\a+\beta) R(e_{2n-1},\gamma',\gamma',e_{2n-1})\right)dt \\
= {} & \frac{\ell}{2}\lf(\alpha(2n-1)+ \beta \ri)\left(\frac{\pi}{\ell}\right)^{2}
-\int_{0}^{\ell}\sin^{2}\left(\frac{\pi t}{\ell}\right)\left(\a\Ric(\gamma',\gamma')+\b R(J\gamma',\gamma',\gamma',J\gamma') \right)dt \\
\leq {} & \frac{\ell}{2}\left((\alpha(2n-1)+\beta)\left(\frac{\pi}{\ell}\right)^{2}-\lambda\right),
\end{split}
\end{equation*}
where we used \eqref{diameter eqn 2} in the last inequality. This completes the proof.
\end{proof}

{\begin{proof}[Proof of Theorem \ref{positive-simplyconnect}] By Proposition \ref{p-hodge}, $h^{p,0}=0$ for $1\le p\le n$. Hence $M$ is projective by a result of Kodaira. By Proposition \ref{l-sc}, the universal cover $\wt M$ is also compact with zero Hodge numbers $\wt h^{p,0}$ for $1\le p\le n$ and is projective. Then one can conclude that $M$ is simply connected by comparing the Euler characteristic numbers of $M$ and $\wt M$ using \cite[Lemma 1]{Kobayashi1961}.
\end{proof}
}

\end{document}